\newcommand{\HH}{\mathbb{H}^3}
\newcommand{\RR}{\mathbb{R}}
\newcommand{\MM}{\mathcal{M}}
\newcommand{\II}{{\rm I}\!{\rm I}}
\newcommand{\VR}{{\rm V}_{\rm R}}
\newcommand{\VC}{{\rm V}_{\rm C}}
\newtheorem{theorem}{Theorem}[section]
\newtheorem{lem}{Lemma}[section]
\newtheorem{defi}{Definition}[section]
\newtheorem{que}{Question}[section]
\newtheorem{remark}{Remark}[section]
\title{Upper bounds on Renormalized Volume for Schottky groups}
\author{Franco Vargas Pallete}
\thanks{Research supported by the Minerva Research Foundation}
\address{Institut des Hautes \'Etudes Scientifiques, Bures-sur-Yvette, France}
\email{\protect\url{vargaspallete@ihes.fr}}
\begin{document}

\maketitle

\begin{abstract}
    In this article we show that for any given Riemann surface $\Sigma$ of genus $g$, we can find an upper bound for the renormalized volume of a (hyperbolic) Schottky group with boundary at infinity conformal to $\Sigma$. This bound depends on the genus of $\Sigma$ and the combined extremal lengths on $\Sigma$ of $(g-1)$ disjoint, non-homotopic, simple closed compressible curves, whose complement is the union of genus $1$ components. This bound on $\VR$ is used to partially answer a question posed by Maldacena about comparing renormalized volumes of Schottky and Fuchsian manifolds with the same conformal boundary.
\end{abstract}

\section{Introduction}\label{sec:intro}

Renormalized volume (denoted by $\VR$) is a geometric quantity motivated by the AdS/CFT correspondence and the calculation of gravitational action (\cite{Witten98}). We will describe the setup for our main question intuitively, delaying more specific details until Section \ref{sec:background}. In the case of convex co-compact hyperbolic manifolds, we can define $\VR$ as follows: we start with the divergent integral of the volume form, take an exhaustion by compact sets of the manifold and apply a process of renormalization, so we can rescue a finite quantity that is geometrically meaningful. This finite quantity is what we denote by $\VR$. People interested in gravitational action ask the following question \cite{Malda}, which is also a natural geometric question to consider: Given a fixed conformal manifold $\Sigma$, and two model geometries $M_1, M_2$ (not necessarily hyperbolic) that are asymptotic to $\Sigma$ at infinity, which one of the quantities $\VR(M_1),\,\VR(M_2)$ is the largest/smallest? In this article we give a partial answer to this type of question (posed by Maldacena via personal communication to the author) in the context of hyperbolic $3$-manifolds, where we particularly compare some of the simplest cases to consider, the Schottky and Fuchsian models. This is accomplished by showing explicit upper bounds on $\VR$ for Schottky manifolds and then comparing such bounds to the renormalized volume of Fuchsian manifolds.                                                                                     

The article is organized as follows. In Section \ref{sec:background} we give the appropriate background for renormalized volume $\VR$ and motivate Maldacena's question about bounds for $\VR$. It also includes the formulation for our partial answer. Section \ref{sec:prel} describes preliminary results we will need in order to bound $\VR$. In particular, we discuss the bounds of $\VR$ in terms of the volume of the convex core $\VC$ and the bending lamination. We setup as well the isoperimetric inequalities that hold in hyperbolic $3$-space. In Section \ref{sec:bounds} we will describe how these isoperimetric inequalities of Section \ref{sec:prel} give a bound for $\VC$ in terms of extremal lengths of the conformal boundary. This bound will be optimized by compressing the shortest set of $g-1$ curves, namely the ones with minimal sum of square roots of extremal lengths, as long as the compression yields a union of tori. We end by proving our main result, which is a bound depending only on genus and extremal length of compressing curves, while also giving conditions on the shortest curves to show that $\VR$ is smaller in the Schottky model.

\textbf{Acknowledgements} I would like to thank Juan Maldacena for bringing this problem to my attention, as well as for our much appreciated discussions about the topic. I would also like to thank Yair Minsky for his very helpful advice and suggestions. I am also very thankful to Didac Martinez-Granado, Robin Neumayer and Celso Viana for our conversations while working on this problem, as well for the helpful feedback from the anonymous referee.

\section{Background}\label{sec:background}
 
Let us start by defining convex co-compact hyperbolic manifolds.

\begin{defi}
We say that a hyperbolic $3$-manifold $M=\mathbb{H}^3/\Gamma$ is convex co-compact if there exists convex compact submanifold $N\subset M$ so that the inclusion map $N\hookrightarrow M$ is a homotopic equivalence.
\end{defi}

The next two well-known examples will be our main focus of study.

\noindent\textbf{Examples:} 
\begin{enumerate}
    \item\label{example:fuchsian} For Riemann surface $\Sigma=(S,h)$, consider the manifold $M=S\times\mathbb{R}$ with hyperbolic metric $ \cosh^2(t)h + dt^2$. Then for any $a<0<b$ it is an easy exercise to verify that the submanifold $N_{a,b}=S\times[a,b]$ is convex. Such manifolds $M$ are known as \textit{Fuchsian}.
    
    \item\label{example:schottky} Let $\{D^\pm_j\}$ a finite disjoint collection of closed (topological) disks in $\overline{\mathbb{C}}$ and let $\{ g_j\}$ a collection of M\"obius maps so that $g_j(\overline{\mathbb{C}\setminus D^+_j}) = D^-_j$. Then the group $\Gamma$ generated by $\lbrace g_j \rbrace$ is freely generated and $M=\mathbb{H}^3/\Gamma$ has the topology of a handlebody. Such manifolds $M$ are known as \textit{Schottky}.
\end{enumerate}

Renormalized volume for convex co-compact hyperbolic $3$-manifolds (as described in \cite{KS08}) is motivated by the computation of the gravity action $S_{gr}[g]$ in the context of the Anti-de-Sitter/Conformal Field Theory (AdS/CFT) correspondence (\cite{Witten98}). For an Einstein manifold $(M,g)$ one would like to calculate the following expression:

\begin{equation}
    S_{gr}[g] := -\int_M(R-2\Lambda)dv - 2\int_{\partial M}\II,
\end{equation}
where $R$ is the scalar curvature of $(M,g)$, $\II$ is the second fundamental form of $\partial M$, $\Lambda$ is the cosmological constant, and $dv$ correspond the volume form in $M$.

For hyperbolic $3$-manifolds we have that $R=-6$. Moreover, $\Lambda$ relates to the radius of curvature by $l=1/\sqrt{-\Lambda}$, so $\Lambda =-1$. Hence the gravity action has the simpler expression

\begin{equation}\label{eq:gactionhyp}
    S_{gr}[g] = 4\int_M dv + 2\int_{\partial M}Hda = 4\left(\int_M dv - \frac12 \int_{\partial M}\II\right).
\end{equation}
Although this integral diverges for $M$ convex co-compact, we can still make sense of the calculation in (\ref{eq:gactionhyp}) by a process of \textit{renormalization}. This allows us to understand how the integral blows up as we exhaust $M$ by compact subsets, and then rescue a number out of it. Moreover, we want to do it in such a way that $S_{gr}[g]$ is a function on the conformal boundary. Given our geometric approach, we will describe how to do so for $ {\rm vol}(M) - \frac12 \int_{\partial M}Hda$, which ends up having the same renormalization as $\frac14 S_{gr}[g]$.

We can define the $W$-$volume$ of a compact, convex $C^{1,1}$-submanifold $N\subset M$ as

\begin{equation}
    W(M,N)= {\rm vol}(N) - \frac12 \int_{\partial N}Hda,
\end{equation}
where $H$ stands for mean curvature (i.e. the arithmetic mean of the principal curvatures at a given point).

Given a convex co-compact hyperbolic $3$-manifold $M$, we define its \textit{domain of discontinuity} $\Omega(M)$ as the largest set of $\partial_\infty\mathbb{H}^3$ where $\pi_1(M)$ acts properly discontinuously. We define the \textit{conformal boundary at infinity} of $M$, denoted by $\partial_\infty M$, as the quotient by $\pi_1(M)$ of the domain of discontinuity of $\pi_1(M)$ on $\partial_\infty\mathbb{H}^3$. Since $\pi_1(M)$ acts properly discontinuously by conformal maps on its domain of discontinuity $\Omega(M)$, $\partial_\infty M$ has a naturally defined conformal structure. We  refer to this conformal structure as the \textit{conformal class at infinity} of $M$. By an analogous reasoning, $\partial_\infty M$ has a naturally defined projective structure, meaning local charts to $S^2$ with transition maps given by M\"obius transformations.

\noindent\textbf{Examples:} 
\begin{enumerate}
    \item\label{example:fuchsianinfinity} If $M$ is a Fuchsian manifold obtained by a Riemann surface $S$ as in Example \ref{example:fuchsian}, then $\partial_\infty M$ is the disjoint union of $S$ and $S'$, where $S'$ is the Riemann surface obtained by reversing the orientation of $S$.
    
    \item\label{example:schottkyinfinity} If $M$ is a Schottky manifold obtained by disjoint closed disks $\lbrace D^\pm_j \rbrace$ and M\"obius transformation $\lbrace g_j\rbrace$ as in Example \ref{example:schottky}, then $\partial_\infty M$ is given by taking the quotient of the closure of $\overline{\mathbb{C}}\setminus\left( \cup_j D^\pm_j \right)$ by the boundary identifying maps $g_j:\partial D^+_j \rightarrow \partial D^-_j$.
\end{enumerate}

Given a Riemannian metric $h$ in the conformal class of $\partial_\infty M$, Epstein (\cite{Epstein}) constructs a family convex submanifolds $N_r$ (for $r$ sufficiently large depending on the metric) with equidistant boundaries by taking envelopes of families of horospheres. Such a family of equidistant submanifolds depends on the projective structure of $(\partial M, h)$ and exhausts the manifold $M$. Because the boundaries are equidistant, $W$-volume along the family $N_r$ has the property ([\cite{Schlenker13}, Lemma 3.6])

\begin{equation}\label{eq:equidist}
    W(M,N_r) = W(M,N_s) - \pi(r-s)\chi(\partial M)
\end{equation}
which leads to the following definition

\begin{equation}
    W(M,h) := W(M,N_r) + \pi r\chi(\partial M).
\end{equation}
Observe that $W(M,h)$ is well-defined since by (\ref{eq:equidist}) the right-hand side does not depend on $r$.

By taking $h_{\rm hyp}$, the unique metric of constant curvature in the given conformal class at infinity, we define Renormalized Volume $\VR$ as

\begin{equation}
    \VR(M) = W(M,h_{\rm hyp})
\end{equation}

The relationship between the conformal metric at infinity and the equidistant foliation is defined in such a way that if we denote the metric in $\partial N_r$ by $h_r$ and identify the boundaries $\partial N_r$ by the normal geodesic flow, then the metric $h$ at infinity is given by $h=\lim_{r\rightarrow\infty} 4e^{-2r}h_r$. This choice of constant is done so the following example works as described.

\textbf{Example:} Given a Riemann surface $\Sigma=(S,h)$, consider the Fuchsian manifold $M=S\times\mathbb{R}$ with hyperbolic metric $ \cosh^2(t)h + dt^2$. The equidistant foliation for the hyperbolic metric at infinity (which is isometric to $\Sigma$) coincides with the product foliation of $S\times\mathbb{R}$. Moreover (and here is where the relation between conformal metric and foliation is determined) $N_0$ collapses to be equal to the totally geodesic surface $S\times\lbrace0\rbrace$. Since then both ${\rm vol}(N_0)$ and $H_{\partial N_0}$ vanish, we have:

\begin{equation}\VR(S\times\mathbb{R}, \cosh^2(t)h + dt^2) = W(M,N_0) = 0\\
\end{equation}

Maldacena's question \cite{Malda} asks, for a fixed Riemann surface $\Sigma$ (with the topological type of a closed surface $S_g$ of genus $g$), to compare the gravity actions of (potentially disconnected) hyperbolic $3$-dimensional fillings of 2 copies of $\Sigma$ with opposite orientation. More precisely, one could take $M$ to be

\begin{enumerate}
    \item\label{qfcase} A hyperbolic metric in $S\times\mathbb{R}$, such that the conformal boundary $\partial_\infty M$ are the Riemann surface $\Sigma$ and its reversed orientation $\Sigma'$. Such manifolds are in general \textit{Quasi-Fuchsian}. Among those manifolds we have the Fuchsian example explained in \ref{example:fuchsian}
    \item\label{schcase} A disjoint union of two Schottky manifolds with boundary $S$, such that the conformal boundary of the ends are the Riemann surface $\Sigma$ and its reversed orientation $\Sigma'$. The Schottky components might be distinct, but among those examples we have a Schottky manifold with conformal boundary $\Sigma$ and the Schottky manifold obtained by reversing orientation (i.e. conjugating the group by an orientation reversing isometry of $\mathbb{H}^3$).
\end{enumerate}
We now explain a reasonable guess for which fillings between (\ref{qfcase}) and (\ref{schcase}) would produce the smallest gravitational action, and how that in turn motivates Question \ref{que:VRneg}. Given that the filling in case (\ref{qfcase}) is a more straightforward geometric construction and the gravitational action relates to the entropy of the model space, model (\ref{qfcase}) should be the one with smaller gravity action. But since the description between hyperbolic metrics in $M$ and conformal structure in $\partial M$ (via Ahlfors-Bers measurable Riemann mapping theorem, \cite{AB60}) uses \textit{markings} for the conformal structure (landing on the Teichm\"uller space of $\partial M$), we have infinitely many ways to realize cases (\ref{qfcase}) or (\ref{schcase}). Intuitively, each topological model will have assigned a partition function $\sum_{M} e^{-S_{gr}(M)}$, where $M$ is taken along the hyperbolic fillings of the given topological type. Here the value associated in each formal summand is supposed to compare the likelihood of each hyperbolic filling, or in our geometric terms, the simplicity of each filling. Hence, from an intuitive viewpoint, the comparison would be made between the leading coefficients

\begin{equation}
    \inf_{\partial M = \Sigma \text{ as in }(\ref{qfcase})}S_{gr}(M)  < \inf_{\partial M = \Sigma \text{ as in }(\ref{schcase})}S_{gr}(M)
\end{equation}
which in terms of renormalized volume correspond to

\begin{equation}\label{eq:goalcomparison}
    2\inf_{\partial M = \Sigma,\,M \text{ Schottky }}\VR(M)  < \inf_{\partial M = \Sigma,\,M \text{ Quasi-Fuchsian }}\VR(M),
\end{equation}
where the factor of $2$ is due to the $2$ components considered in case (\ref{schcase}).

Work has been done on the right side of (\ref{eq:goalcomparison}). Namely, from [\cite{VP17},Theorem 8.1] or [\cite{BBB19}, Theorem 3.11] we have that, for $M$ Quasi-Fuchsian, $\VR(M)\geq 0$. Moreover, equality holds if and only if we use the same marking for both boundary components. Then the main question reduces to:

\begin{que}\label{que:VRneg}
For a given Riemann surface $\Sigma$, does there exist a Schottky manifold $M$ so that $\partial M$ is conformal to $\Sigma$ and $\VR(M)<0$?
\end{que}

A partial answer to this question is the last component of the main result of this article. For it, we need to define \emph{extremal length} (see \cite{Ahlfors06}).

\begin{defi}\label{defi:extlength}
Given a Riemann surface $\Sigma$ and $\gamma_0\subset\Sigma$ a closed curve, we define the extremal length of $\gamma_0$ in $\Sigma$, denoted by $EL(\gamma_i,\Sigma)$, as
\begin{equation}
    EL(\gamma_0,\Sigma) := \sup_\rho \frac{\inf_{\gamma \sim\gamma_0}\ell^2_\rho(\gamma_i)}{A(\rho)}
\end{equation}
where $\gamma$ ranges over curves homotopic to $\gamma_0$, $\rho$ ranges over all metrics conformal to $\Sigma$ and $A(\rho)$ denotes the total area of $\rho$. Unless needed, we will drop the dependence on $\Sigma$ from now on.
\end{defi}

We state now our main result.

\begin{theorem}\label{thm:main}
Let $\Sigma$ be a Riemann surface of genus $g$, and let $\Gamma$ be a set of $g-1$ mutually disjoint, non-homotopic, simple closed curves of $\Sigma$ with sum of square roots of extremal lengths denoted by $L(\Sigma,\Gamma)$. Moreover, assume that each component of $\Sigma\setminus\Gamma$ has genus $1$. Then for any Schottky manifold $M$ with boundary at infinity conformal to $\Sigma$ so that the curves of $\Gamma$ are compressible in $M$, we have that
\begin{equation}
    \VR(M) \leq L(\Sigma,\Gamma)^2 + \pi(g-1)
\end{equation}

Moreover, if we further assume that $L(\Sigma,\Gamma) \leq \sqrt{\pi(g-1)}$ then we have that

\begin{equation}
    \VR(M) \leq \pi(g-1)\left(3-\frac{\pi(g-1)}{L(\Sigma,\Gamma)^2}\right)
\end{equation}
which answers positively Maldacena's question if $L(\Sigma,\Gamma)\leq \sqrt{\frac{\pi(g-1)}{3}}$.
\end{theorem}

\noindent\textbf{Remark:} Given a Riemann surface $\Sigma$, we minimize the upperbound of Theorem \ref{thm:main} by considering a Schottky manifold $M$ with boundary at infinity $\Sigma$ so that the multicurve $\Gamma$ that minimizes $L(\Sigma,\Gamma)$ is compressible. While not conclusive, this is the case where we have the best available bound for $\VR(M)$ 

\begin{equation}
    \VR(M) \leq L(\Sigma)^2 + \pi(g-1),
\end{equation}
where $L(\Sigma)$ is the least value of $L(\Sigma,\Gamma)$ across allowed multicurves $\Gamma$.

This raises the following question

\begin{que}
For a given Riemann surface $\Sigma$, which multicurve(s) realize $L(\Sigma)$?
\end{que}

\section{Preliminary results}\label{sec:prel}

The main result we will use to estimate $\VR$ is given by Schlenker in \cite{Schlenker13} in the case of Quasi-Fuchsian manifolds and by Bridgeman and Canary in \cite{BC15} for convex co-compact manifolds.

\begin{theorem}[Theorem 1.1 \cite{Schlenker13}, Proof of Theorem 1.2 \cite{BC15}]\label{thm:VRVC}
$$\VR (M) \leq \VC(M) - \frac14 L(\mu)$$
\end{theorem}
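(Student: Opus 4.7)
The strategy is to invoke Schlenker's bound (Theorem~\ref{thm:VRVC}), $\VR(M) \leq \VC(M) - \tfrac{1}{4}L(\mu)$, and to produce a geometric upper bound for $\VC(M)$ in terms of the extremal lengths of curves in $\Gamma$ using the compressibility hypothesis and the isoperimetric inequality in $\HH$ that Section~\ref{sec:prel} develops. The first two inequalities of the theorem will then follow from the $\VC$-bound together with the trivial estimate $L(\mu)\geq 0$, and the refined third inequality will come from a quantitative lower bound on $L(\mu)$ in the small-$L(\Sigma,\Gamma)$ regime.

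To bound $\VC(M)$, I pick for each $\gamma_i \in \Gamma$ an embedded annulus $A_i \subset \Sigma$ with modulus essentially $1/\mathrm{Ext}_\Sigma(\gamma_i)$ (available by definition of extremal length). Because $\gamma_i$ is compressible in the handlebody $M$, I can extend each $A_i$ slightly into $M$ and close it off using compressing disks; the resulting family of surfaces bounds a convex submanifold $N$ containing the convex core. Applying the isoperimetric inequality in $\HH$ controls $\mathrm{vol}(N)$ by the boundary area, which decomposes into annular contributions summing to $L(\Sigma,\Gamma)^2$ (via the modulus/extremal-length conversion) plus compressing-disk contributions summing to $\pi(g-1)$. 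This yields $\VC(M) \leq L(\Sigma,\Gamma)^2 + \pi(g-1)$. Feeding this into Theorem~\ref{thm:VRVC} together with $L(\mu)\geq 0$ proves the first bound, and the strengthened second bound follows by taking $\inf_\Gamma$.

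For the refined bound under $L(\Sigma,\Gamma)^3 \leq 2\pi^2$, I would replace the trivial $L(\mu)\geq 0$ by a quantitative lower bound on the bending of the convex core boundary: when the compressible curves are sufficiently short, the pleated boundary must bend strongly along their compressing disks, and the natural quantitative form of this phenomenon should read $\tfrac{1}{4}L(\mu) \geq L(\Sigma,\Gamma)^2 + \pi^2/L(\Sigma,\Gamma)^2 - 2\pi$. The right side is non-negative by AM--GM, and combining it with the $\VC$-bound inside Schlenker's inequality collapses the estimate to $\VR(M) \leq \pi\bigl(g+1 - \pi/L(\Sigma,\Gamma)^2\bigr)$. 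The condition $L^3\leq 2\pi^2$ identifies the regime where this bending estimate applies, and the $g=2$ subcase is parallel but simpler since $\Gamma$ reduces to a single curve with a single compressing disk. The Maldacena consequence is then an immediate numerical observation: $L(\Sigma,\Gamma)^2 \leq \pi/(g+1)$ forces $\pi(g+1-\pi/L^2) \leq 0$, yielding a non-positive Schottky $\VR$ which beats the known $\VR \geq 0$ bound on the Quasi-Fuchsian side. I expect the main obstacle of the whole argument to be the quantitative bending estimate in the refinement step; the $\VC$-bound itself is a fairly routine combination of extremal-length annuli with the isoperimetric inequality in hyperbolic 3-space.
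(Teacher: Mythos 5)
The statement you were asked to prove is Theorem \ref{thm:VRVC}, i.e.\ the inequality $\VR(M) \leq \VC(M) - \frac14 L(\mu)$. In the paper this is not proved at all: it is imported verbatim from [\cite{Schlenker13}, Theorem 1.1], and its proof there goes through the variational properties of the $W$-volume and a comparison between the hyperbolic metric and the Thurston (grafted) metric at infinity --- none of which appears in this article. Your proposal does not engage with that statement. Its very first step is to \emph{invoke} Theorem \ref{thm:VRVC} as a known fact, and what follows is a sketch of Theorem \ref{thm:main} instead. As an argument for the assigned statement this is vacuous and circular: you cannot establish Schlenker's inequality by assuming it and deriving downstream consequences. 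The concrete gap is therefore that no step of the proposal addresses why $\VR(M) \leq \VC(M) - \frac14 L(\mu)$ holds.

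Even read charitably as a proposed proof of Theorem \ref{thm:main}, the sketch does not match the paper's argument and has its own problems. The paper never proves $\VC(M) \leq L(\Sigma,\Gamma)^2 + \pi(g-1)$ directly; it proves $\VC(M) \leq \ell_T(\Gamma)$ by cutting the convex core along compressing disks bounded by Thurston-geodesic representatives, bounding each disk's area by its total bending via Lemma \ref{lem:bend}, and applying the isoperimetric inequality to iterated unions of copies of the cut-open core. The quantity $L(\Sigma,\Gamma)^2 + \pi(g-1)$ only emerges after the estimate $\ell_T(\Gamma) \leq L(\Sigma,\Gamma)\sqrt{4\pi(g-1)+L(\mu)}$ and maximization of the quadratic $x \mapsto L(\Sigma,\Gamma)x - x^2/4$; the term $-\frac14 L(\mu)$ is essential to that optimization and is not discarded via $L(\mu)\geq 0$ as you propose. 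Your alternative route --- thickened extremal annuli bounding a convex submanifold whose boundary area splits as $L(\Sigma,\Gamma)^2$ plus $\pi(g-1)$ --- is asserted, not constructed, and the refined bound in the paper does not rest on any quantitative lower bound for bending of the form $\frac14 L(\mu) \geq L(\Sigma,\Gamma)^2 + \pi^2/L(\Sigma,\Gamma)^2 - 2\pi$; it follows from the elementary observation that $\ell_T(\gamma_i) > 2\pi$ forces the quadratic's variable to start beyond its critical point when $L(\Sigma,\Gamma)^3 \leq 2\pi^2$.
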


Here $\VC$ denotes the volume of the \textit{convex core} of $M$, which in turn we denote by $CC(M)$. The convex core is the smallest submanifold with convex boundary that is a homotopic retraction of $M$. The boundary of the convex core, $\partial CC(M)$, is a hyperbolic surface (with the path metric induced by $M$) whose embedding into $M$ is totally geodesic outside a closed set of disjoint complete geodesic, called the \textit{bending lamination}. Along the geodesic lamination $\partial CC(M)$ \textit{bends}, meaning that for any segment transverse to the lamination we have a well-defined bending angle. Such structure is called \textit{bending measure}, denoted by $\mu$. If we take the expected vale for the bending for a random unit segment (under the natural measure), we will obtain the \textit{total bending} of $\partial CC(M)$, also known as \textit{length of the bending lamination}, which we denote by $L(\mu)$.

A very useful construction related to the convex core is the \emph{Thurston metric} (also known as \emph{grafting metric}). The Thurston metric (see \cite[Section 2.2]{BBB19}, \cite[Chapter II.2]{EpsteinMarden06}, \cite{KamishimaTan} for more details) is a metric in the conformal class at infinity obtained by taking the hyperbolic surface $\partial CC(M)$ and adding flat regions along the bending lamination, whose thickness is given by the bending. This is easily visualize when the bending locus of the convex core is supported on disjoint closed geodesics, while the general construction and arguments are extended to general bending laminations by continuity.

With Theorem \ref{thm:VRVC} and Question \ref{que:VRneg} in mind, we will aim to prove that the term $\VC(M) - \frac14 L(\mu)$ is non-positive for certain Schottky manifolds. Hence we need a bound on $\VC$ which is comparable to $L(\mu)$, under the correct choice of compressible curves. Note that Theorem \ref{thm:VRVC} already positively answers Maldacena's question in the particular case when among all possible Schottky manifolds with a given conformal boundary there is a Fuchsian representative of the second kind (i.e. the associated group of isometries of $\mathbb{H}^3$ preserves a circle). This is because for this class of Schottky manifolds the convex core degenerates into a totally geodesic surface with boundary, so $\VC=0$ and $L(\mu)>0$.

\noindent\textbf{Remark.} In \cite{BC05} Bridgeman and Canary compare the length of the bending lamination with the inverse of injectivity radius of the Poincar\'e metric at infinity or with the inverse of the injectivity radius of the intrinsic metric of the covering of the convex hull. In \cite{BC15} the same authors use that to bound $\VR-\VC$. In our search for a bound of $\VC$ we keep \cite{BC15} results in mind, so we look for an upper bound that includes both the total bending of the convex core and short curves of the boundary.

Next, we state the tools we will use to bound volumes and related quantities.

\begin{theorem}[Hyperbolic isoperimetric inequality]\label{thm:isop} Let $B$ be a topological ball in $\HH$ with rectifiable boundary. Then $|B| < \frac12|\partial B| $.
\end{theorem}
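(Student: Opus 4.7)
The plan is to construct a unit vector field $X$ on $\HH$ with constant divergence equal to $2$ and then apply the divergence theorem to $B$. Such an $X$ is furnished by any horospherical foliation: fix an ideal point $p$ at infinity, and let $X$ be the unit field everywhere normal to the horospheres centered at $p$, pointing away from $p$. Each horosphere in $\HH$ is umbilic with principal curvatures equal to $1$, so the shape operator of the foliation has trace $2$; equivalently, $\mathrm{div}(X) \equiv 2$. In the upper half-space model with $p = \infty$, this field is simply $X = -z\,\partial_z$, and a one-line computation with the hyperbolic volume form $dx\,dy\,dz / z^3$ confirms both $|X| \equiv 1$ and $\mathrm{div}(X) \equiv 2$.

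With $X$ in hand, the divergence theorem yields
\begin{equation*}
2|B| \;=\; \int_B \mathrm{div}(X)\, dv \;=\; \int_{\partial B} \langle X, \nu\rangle\, da \;\leq\; \int_{\partial B} |X|\, da \;=\; |\partial B|,
\end{equation*}
where $\nu$ is the outward unit normal to $\partial B$. This already yields the non-strict version of the isoperimetric inequality. For strictness, note that equality in the Cauchy--Schwarz step forces $\nu = X$ almost everywhere on $\partial B$, so that $\partial B$ is tangent to the horosphere through each of its points and hence lies inside a single horosphere centered at $p$. But $\partial B$ is the boundary of a topological ball, hence a compact topological sphere, while any horosphere is noncompact. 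This contradiction yields the strict inequality $|B| < \tfrac12 |\partial B|$.

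\textbf{Main subtlety.} There is no serious obstacle: the argument is standard and hinges on a feature specific to negative curvature, namely that $\HH$ admits a unit vector field with positive constant divergence (no such field exists in $\mathbb{R}^3$, which is precisely why the Euclidean isoperimetric inequality is nonlinear in volume). The only minor care required is in the regularity of $\partial B$: since the statement allows merely rectifiable boundary, one should either approximate $\partial B$ by smooth surfaces enclosing slightly smaller regions and pass to the limit, or interpret the divergence theorem in the sense of sets of finite perimeter, where the outward unit normal exists as a measurable field almost everywhere on the reduced boundary and the pointwise Cauchy--Schwarz argument still applies.
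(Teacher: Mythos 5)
Your proof is correct, but it takes a genuinely different route from the paper's. The paper disposes of this statement in one line: it verifies the inequality directly for round balls (where $\tfrac12|\partial B_r|-|B_r|=\pi(e^{-2r}-1+2r)>0$) and then invokes the classical fact, due to Schmidt, that round balls solve the isoperimetric problem in $\HH$, so an arbitrary ball of the same volume has no smaller boundary area. You instead give a calibration-type argument: the unit normal field to a horospherical foliation has constant divergence $2$, and the divergence theorem plus Cauchy--Schwarz gives $2|B|\le|\partial B|$ directly. Your computation in the upper half-space model is right ($X=-z\,\partial_z$ is unit with $\mathrm{div}\,X=2$ against $dv=z^{-3}dx\,dy\,dz$), and your approach buys self-containedness --- it avoids citing the full solution of the isoperimetric problem and isolates exactly the feature of negative curvature that makes the inequality linear in volume. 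Two small points of care. First, your equality-case analysis: what the noncompactness of the horosphere really gives you is that $\partial B$, being compact and (by invariance of domain) open in the leaf it would have to lie in, would be a nonempty proper clopen subset of a connected surface --- that is the contradiction, not merely compact-inside-noncompact; and in the rectifiable/finite-perimeter setting the step from "$\nu=X$ a.e.\ on the reduced boundary" to "$\partial B$ lies in a single leaf" needs the approximation or GMT framework you mention, so it is worth actually carrying one of those out rather than listing both as options. Second, strictness comes essentially for free in the paper's route (the round-ball ratio is strictly above $2$ for every radius), whereas in yours it is the only delicate step; if you want to sidestep it entirely you could combine your divergence argument for the non-strict bound with the explicit round-ball computation to recover strictness, but as written your argument does close the gap.
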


This follows easily by verifying such inequality for round balls in $\HH$ and from the knowledge that round balls are the solution of the isoperimetric problem in $\HH$ (see for instance \cite{Schmidt48}).

For a piecewise smooth curve $\gamma$ in a Riemannian manifold $M$ (parametrized by arc-length), we can define its \textit{geodesic curvature} as $k(s)=|\gamma''(s)|$. More precisely, $k(s)$ is defined as $|\nabla_{\gamma'(t)}\gamma'(t)\vert_{t=s}|$. We can then define the \textit{total geodesic curvature} $\theta(\gamma)$ by taking the integral $\theta(\gamma)= \int_\gamma k$, and extend this definition for $C^{1,1}$ curves.

\begin{lem}\label{lem:bend}
Let $\gamma$ be a homotopically trivial $C^{1,1}$ curve in $\HH$ which is in the boundary of a convex set, and let $\theta(\gamma)$ be its total geodesic curvature. Then $\gamma$ bounds a disk $D$ of area less than $\theta(\gamma)-2\pi+\epsilon$ for all $\epsilon>0$. 
\end{lem}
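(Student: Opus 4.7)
I would take $D$ to be a region on the boundary of the convex hull of $\gamma$. Set $K = \mathrm{conv}(\gamma) \subset \HH$. Any hyperplane supporting $C$ at $p \in \gamma$ also supports $K \subseteq C$ at $p$, so $\gamma \subset \partial K$. If $K$ has empty interior then $\gamma$ lies in a totally geodesic plane and the estimate reduces to Gauss--Bonnet in $\mathbb{H}^2$; otherwise $\partial K$ is topologically a sphere and $\gamma$ separates it into two disks, either of which is a candidate for $D$.

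The core of the argument is the case where $\gamma = p_1 p_2 \cdots p_n p_1$ is a geodesic polygon. Then $K$ is a hyperbolic polyhedron whose extreme points form a subset of the $p_i$. Each face of $\partial K$ is a totally geodesic hyperbolic polygon, and in generic position one of the two disks bounded by $\gamma$ contains no vertex of $K$ in its interior; call it $D$. On the interior of $D$ one has $K_{\mathrm{int}} \equiv -1$ and the boundary $\partial D = \gamma$ is piecewise geodesic, so Gauss--Bonnet gives
\begin{equation*}
\mathrm{area}(D) = \sum_{i=1}^n \alpha_i^{\mathrm{int}} - 2\pi,
\end{equation*}
where $\alpha_i^{\mathrm{int}} = \pi - \beta_i^{\mathrm{int}}$ is the intrinsic exterior angle at $p_i$ and $\beta_i^{\mathrm{int}}$ is the total face-angle of $D$ at $p_i$. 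The key inequality is $\alpha_i^{\mathrm{int}} \le \alpha_i^{\mathrm{amb}}$, where $\alpha_i^{\mathrm{amb}}$ is the ambient exterior angle of $\gamma$ at $p_i$ in $\HH$. This follows from the spherical triangle inequality on the unit tangent sphere at $p_i$: the tangent directions of the consecutive face-edges of $\partial D$ at $p_i$ form a polygonal path of great-circle arcs from the incoming to the outgoing $\gamma$-tangent, and this path length dominates the direct spherical distance, which is $\beta_i^{\mathrm{amb}}$. Summing and using $\theta(\gamma) = \sum \alpha_i^{\mathrm{amb}}$ for polygons yields $\mathrm{area}(D) \le \theta(\gamma) - 2\pi$.

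For a general rectifiable $\gamma$, I would inscribe finer polygonal approximations $P_n$ with vertices on $\gamma$, arranging $\theta(P_n) \to \theta(\gamma)$ via the variational characterisation of total geodesic curvature as total variation of the tangent field. The polygonal step produces disks $D_n$ with $\partial D_n = P_n$ and $\mathrm{area}(D_n) \le \theta(P_n) - 2\pi$, and splicing in the thin ruled annulus between $P_n$ and $\gamma$ (whose area is $o(1)$ under refinement) yields a disk bounded by $\gamma$ of area $< \theta(\gamma) - 2\pi + \epsilon$ for any prescribed $\epsilon > 0$, which is exactly the desired bound. The main obstacle I expect is technical: arranging a generic polygonal position so that $D_n$ carries no angle-defect contributions from interior vertices of $\partial K_n$, and justifying the limit of both the curvature sums and the strip areas under only the rectifiability hypothesis on $\gamma$.
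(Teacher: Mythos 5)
Your proposal is correct and follows essentially the same route as the paper: reduce to the geodesic polygon case using a disk on the boundary of the convex hull of $\gamma$, bound its area by Gauss--Bonnet (the paper phrases this via the triangle area formula for a geodesic triangulation) together with the comparison between intrinsic and ambient exterior angles at each vertex, and then pass to general rectifiable $\gamma$ by polygonal approximation spliced with a thin annulus of vanishing area. The only cosmetic difference is that you make the angle comparison explicit as a spherical triangle inequality on the unit tangent sphere, where the paper simply invokes the "angle triangular inequality."
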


\begin{proof}
Let us first prove the analogous result for the case when $\gamma$ is a geodesic polygon with $m$ vertices. Take a disk $D$ triangulated by geodesic triangles with vertices in the vertex set of $\gamma$. It is easy to see that such disk $D$ exists, as one could for instance considering $d$ to be one of the components of $\partial C_\gamma \setminus \gamma$, where $C_\gamma$ is the convex hull of $\gamma$. Note that $D$ (probably after some triangular subdivision) is a geodesic triangulation $T$ consisting on $m-2$ triangles.

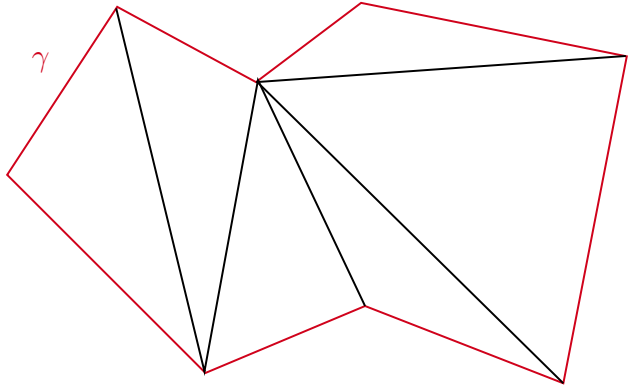
\begin{figure}[!htb]
    \centering

\tikzset{every picture/.style={line width=0.75pt}} %set default line width to 0.75pt        

\begin{tikzpicture}[x=0.75pt,y=0.75pt,yscale=-1,xscale=1]
%uncomment if require: \path (0,411.8000030517578); %set diagram left start at 0, and has height of 411.8000030517578

%Shape: Polygon [id:ds9798945582699294] 
\draw  [color={rgb, 255:red, 208; green, 2; blue, 27 }  ,draw opacity=1 ] (172,192) -- (272,292) -- (352.5,258.2) -- (452.5,297.2) -- (484.5,132.2) -- (350.5,105.2) -- (297.5,145.2) -- (227.5,107.2) -- cycle ;
%Straight Lines [id:da6092825045192722] 
\draw    (227,108) -- (271.5,291.2) -- (298.5,144.2) -- (352.5,258.2) ;

%Straight Lines [id:da5627081127584735] 
\draw    (298,145) -- (452.5,297.2) ;

%Straight Lines [id:da5227118194326517] 
\draw    (484,132) -- (297.5,145.2) ;

% Text Node
\draw (189,135) node   {$\textcolor[rgb]{0.82,0.01,0.11}{\gamma }$};

\end{tikzpicture}

    \caption{Polygonal disk with boundary $\gamma$}
    \label{fig:gamma}
\end{figure}

Given that the area of a hyperbolic triangle is $\pi-(\text{sum of its interior angles})$, then the area of $D$ is equal to $\pi(m-2) - (\text{sum of all interior angles of }T)$. Now, by angle triangular inequality, if $v$ is a vertex of $\gamma$, then $(\text{sum of angles of }T\text{ around }v) \geq (\text{interior angle of } \gamma \text{ at }v)$. Then we have

\begin{equation}
    \begin{split}
    |D| &= \pi(m-2) - (\text{sum of all interior angles of }T) \\&\leq \pi(m-2) - (\text{sum of interior angles of }\gamma) = (\text{sum of exterior angles of }\gamma) - 2\pi.
    \end{split}
\end{equation}
This bound is useful since only depends on $\gamma$ and not on the particular disk taken. For a general curve $\gamma$ take $\gamma_k$ a sequence of finer and finer polygonal approximations of $\gamma$, such that $|(\text{sum of exterior angles of }\gamma_k) - \theta(\gamma)| < \frac1k$. We can also assume that $\gamma$ and $\gamma_k$ are cobordant by an annulus of area less than $\frac1k$. Considering the union of these surfaces, given any $k>0$, $\gamma$ bounds a disk with area bounded by $(\theta(\gamma)-2\pi) + \frac2k$ 

\end{proof}

\textbf{Remark.} \textit{On the convergence of (sum of exterior angles of $\gamma_k$) to $\theta(\gamma)$}

Recall that $\nabla_{\gamma'(t)}\gamma'(t)$ can be approximated by $\frac1h(P_{-h}(\gamma'(t+h)) - \gamma'(t))$ with an error linear on $h$, where $P_{-h}$ represents the parallel transport from $\gamma(t+h)$ to $\gamma(t)$. And because the vectors $\gamma'(t), \gamma'(t+h)$ are unitary, then (up to another linear error on $h$) we can take the approximation as $\frac1h\theta_{t,t+h}$, where $\theta_{t,t+h}$ is the angle between $P_{-h}(\gamma'(t+h))$ and $ \gamma'(t)$.

Now, denoting by $\rho_{t,t+h}$ the geodesic between $\gamma(t)$ and $\gamma(t+h)$, then the angle $\theta_{t,h}$ can be calculated as the sum of the angles between $\rho_{t,t+h}$ and $\gamma$ at $\gamma(t)$ and $\gamma(t+h)$. Hence, for $\gamma_k$, the sum of its exterior angles can be rearranged as the sum of angles between the geodesic segments forming $\gamma_k$ and $\gamma$, at the vertices of $\gamma_k$ that we denote by $\gamma_k(t_i)$. This is equal to the sum of angles $\sum_i\theta_{t_i,t_{i+1}}$, which is an approximation for $\sum_ik(t_i)(t_{i+1}-t_i)$. Then, because the error was linear on the point distance, it follows that the sum of exterior angles of $\gamma_k$ converges to $\theta(\gamma)$.

\section{Bounds on $\VR$}\label{sec:bounds}

With the main ingredients introduced, we proceed to prove Theorem \ref{thm:main} in 5 steps.

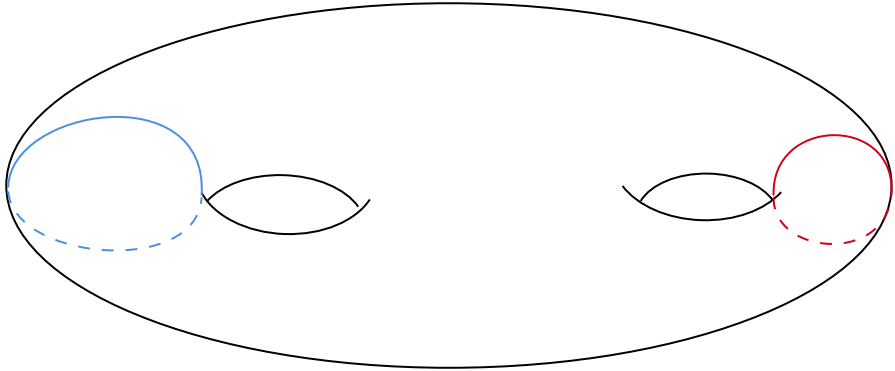
\begin{figure}[!htb]
    \centering

\tikzset{every picture/.style={line width=0.75pt}} %set default line width to 0.75pt        

\begin{tikzpicture}[x=0.75pt,y=0.75pt,yscale=-1,xscale=1]
%uncomment if require: \path (0,411.8000030517578); %set diagram left start at 0, and has height of 411.8000030517578

%Shape: Ellipse [id:dp13421436249688923] 
\draw   (100,193) .. controls (100,142.19) and (199.95,101) .. (323.25,101) .. controls (446.55,101) and (546.5,142.19) .. (546.5,193) .. controls (546.5,243.81) and (446.55,285) .. (323.25,285) .. controls (199.95,285) and (100,243.81) .. (100,193) -- cycle ;
%Shape: Arc [id:dp756507783374722] 
\draw  [draw opacity=0] (283.33,200.05) .. controls (276.25,210.87) and (259.61,218.12) .. (240.47,217.49) .. controls (220.95,216.83) and (204.57,208.18) .. (198.5,196.58) -- (241.47,187.5) -- cycle ; \draw   (283.33,200.05) .. controls (276.25,210.87) and (259.61,218.12) .. (240.47,217.49) .. controls (220.95,216.83) and (204.57,208.18) .. (198.5,196.58) ;
%Shape: Arc [id:dp3235773050209043] 
\draw  [draw opacity=0] (201.5,200.66) .. controls (208.74,192.7) and (222.93,187.46) .. (239.15,187.76) .. controls (256.42,188.09) and (271.15,194.63) .. (277.53,203.72) -- (238.66,213.26) -- cycle ; \draw   (201.5,200.66) .. controls (208.74,192.7) and (222.93,187.46) .. (239.15,187.76) .. controls (256.42,188.09) and (271.15,194.63) .. (277.53,203.72) ;
%Shape: Arc [id:dp7222600635196361] 
\draw  [draw opacity=0] (490.8,196.33) .. controls (482.74,205.3) and (467.61,211.06) .. (450.47,210.49) .. controls (433.05,209.9) and (418.14,202.95) .. (410.81,193.22) -- (451.47,180.5) -- cycle ; \draw   (490.8,196.33) .. controls (482.74,205.3) and (467.61,211.06) .. (450.47,210.49) .. controls (433.05,209.9) and (418.14,202.95) .. (410.81,193.22) ;
%Shape: Arc [id:dp0996795637495349] 
\draw  [draw opacity=0] (419.98,200.78) .. controls (425.02,192.47) and (438.43,186.69) .. (454.05,186.99) .. controls (468.68,187.27) and (481.15,192.79) .. (486.56,200.46) -- (453.64,208.55) -- cycle ; \draw   (419.98,200.78) .. controls (425.02,192.47) and (438.43,186.69) .. (454.05,186.99) .. controls (468.68,187.27) and (481.15,192.79) .. (486.56,200.46) ;
%Curve Lines [id:da4485938033906077] 
\draw [color={rgb, 255:red, 74; green, 144; blue, 226 }  ,draw opacity=1 ]   (101,194) .. controls (101.5,154) and (202.5,138) .. (198.5,198) ;

%Curve Lines [id:da45278221425173837] 
\draw [color={rgb, 255:red, 74; green, 144; blue, 226 }  ,draw opacity=1 ] [dash pattern={on 4.5pt off 4.5pt}]  (101,196) .. controls (103.5,231) and (196.5,239) .. (198.5,199) ;

%Curve Lines [id:da1699515889277754] 
\draw [color={rgb, 255:red, 208; green, 2; blue, 27 }  ,draw opacity=1 ]   (487,198) .. controls (485.5,159) and (546.5,157) .. (546.5,195) ;

%Curve Lines [id:da5187768859056527] 
\draw [color={rgb, 255:red, 208; green, 2; blue, 27 }  ,draw opacity=1 ] [dash pattern={on 4.5pt off 4.5pt}]  (487,199) .. controls (486.5,229) and (546.5,233) .. (546.5,195) ;
\end{tikzpicture}

    \caption{Handlebody for genus $g=2$}
    \label{fig:handlebody}
\end{figure}

\noindent\textbf{\emph{Step 1. First isoperimetric inequality}}

From now on let $M$ be a Schottky manifold with boundary a surface $\Sigma$ of genus $g$. Let $\Gamma$ be a collection of $g$ disjoint compressible simple curves on $\Sigma$ (meaning that each curve in $\Gamma$ bounds a disk in $M$) such that if we compress all curves in $\Gamma$ we are left with a connected $3$-ball. Our first step involves bounding $\VC(M)$ by the total length and curvature of $\Gamma$.

Denote then by $\{\gamma_i\}_{1\leq i \leq g}=\Gamma\subset \Sigma$ the collection of disjoint compressible curves such that $M$ is a ball after cutting along the compressing disks (See Figure \ref{fig:handlebody}). Moreover, assume that each $\{\gamma_i\}_{1\leq i \leq g}$ is a geodesic representative of least length in their respective homotopy class for the Thurston metric of $\partial M$.

Then, if $r$ is the projection from infinity to $\partial CC(M)$, $\{r(\gamma_i)\}_{1\leq i\leq g}$ is a collection of geodesics in $\partial CC(M)$ (with its intrinsic metric) that are compressible in $CC(M)$. Then if $D_i$ is an embedded disk in $CC(M)$ with boundary $r(\gamma_i)$, then the metric completion of $CC(M)\setminus(\cup_i D_i)$ (denoted by $X$) embeds in $\HH$. Note that $X$ is a topological ball whose boundary is made out of two copies of $D_i$ for each $1\leq i\leq g$ and a copy of $\partial CC(M) \setminus (\cup_i r(\gamma_i))$ (See Figure \ref{fig:openHB}). Hence

\begin{equation}
    |\partial X| = 4\pi(g-1) + \sum_{i=1}^{g} 2|D_i|,
\end{equation}
where we have used that the intrinsic metric of $\partial CC(M)$ is hyperbolic.

\begin{figure}
    \centering

\tikzset{every picture/.style={line width=0.75pt}} %set default line width to 0.75pt        

\begin{tikzpicture}[x=0.75pt,y=0.75pt,yscale=-1,xscale=1]
%uncomment if require: \path (0,411.8000030517578); %set diagram left start at 0, and has height of 411.8000030517578

%Shape: Ellipse [id:dp7295863423306591] 
\draw  [color={rgb, 255:red, 208; green, 2; blue, 27 }  ,draw opacity=1 ] (245,77.9) .. controls (245,63.6) and (251.83,52) .. (260.25,52) .. controls (268.67,52) and (275.5,63.6) .. (275.5,77.9) .. controls (275.5,92.2) and (268.67,103.8) .. (260.25,103.8) .. controls (251.83,103.8) and (245,92.2) .. (245,77.9) -- cycle ;
%Shape: Ellipse [id:dp04561016258676287] 
\draw  [color={rgb, 255:red, 208; green, 2; blue, 27 }  ,draw opacity=1 ] (323,263.9) .. controls (323,249.6) and (329.83,238) .. (338.25,238) .. controls (346.67,238) and (353.5,249.6) .. (353.5,263.9) .. controls (353.5,278.2) and (346.67,289.8) .. (338.25,289.8) .. controls (329.83,289.8) and (323,278.2) .. (323,263.9) -- cycle ;
%Shape: Ellipse [id:dp9173160522643743] 
\draw  [color={rgb, 255:red, 74; green, 144; blue, 226 }  ,draw opacity=1 ] (195,177.4) .. controls (195,155.64) and (207.2,138) .. (222.25,138) .. controls (237.3,138) and (249.5,155.64) .. (249.5,177.4) .. controls (249.5,199.16) and (237.3,216.8) .. (222.25,216.8) .. controls (207.2,216.8) and (195,199.16) .. (195,177.4) -- cycle ;
%Shape: Ellipse [id:dp4921481172992467] 
\draw  [color={rgb, 255:red, 74; green, 144; blue, 226 }  ,draw opacity=1 ] (392,173.4) .. controls (392,151.64) and (404.2,134) .. (419.25,134) .. controls (434.3,134) and (446.5,151.64) .. (446.5,173.4) .. controls (446.5,195.16) and (434.3,212.8) .. (419.25,212.8) .. controls (404.2,212.8) and (392,195.16) .. (392,173.4) -- cycle ;
%Curve Lines [id:da07322562960238443] 
\draw    (272.5,63.8) .. controls (302.5,131.8) and (339.5,136.8) .. (417.5,133.8) ;

%Curve Lines [id:da09514499574677981] 
\draw    (249,97) .. controls (257.5,110.8) and (251.5,139.8) .. (221.5,137.8) ;

%Curve Lines [id:da4325789297592817] 
\draw    (221,217) .. controls (269.5,211.8) and (289.5,194.8) .. (323.5,272.8) ;

%Curve Lines [id:da5352383111651359] 
\draw    (350.5,249.8) .. controls (332.5,199.8) and (376.5,208.8) .. (417.5,212.8) ;

\end{tikzpicture}

    \caption{$X\subset\HH$ for genus $g=2$}
    \label{fig:openHB}
\end{figure}

By the hyperbolic isoperimetric inequality (Theorem \ref{thm:isop}) we have

\begin{equation}\label{eq:boundvol}
    |X| < 2\pi(g-1) + \sum_{i=1}^{g} |D_i|,
\end{equation}
which gives a bound on $\VC(M)$ in terms of $|D_i|$. In order to use Lemma \ref{lem:bend} to bound the area of $|D_i|$, we need to bound the bending along of $r(\gamma_i)$. The key observation is that we can approximate $r(\gamma_i)$ by curves with total geodesic curvature bounded by $\ell_T(\gamma_i)$, where $\ell_T$ denotes the length in the Thurston metric. More precisely, we will show in the next step that those total curvatures are  bounded by the length of $\gamma_i$ in the flat pieces of the Thurston metric.

\noindent\textbf{\emph{Step 2. Bounding the bending of the projection of a Thurston geodesic}}

Denote by $\Sigma_\delta$ the set of points in the complement of $CC(M)$ at distance $\delta$ of $CC(M)$. $\Sigma_\delta$ is a $C^{1,1}$ surface whose geometry can be described in a similar to how we described the Thurston metric. In fact, it can be obtained by the construction of Epstein (see \cite{Epstein} and \cite[Section II.2.2]{EpsteinMarden06}) while considering a constant conformal multiple of the Thurston metric. In the complement of the bending lamination, the equidistant set in $\Sigma_\delta$ corresponding to $CC(M)$ is a totally umbilic surface with principal curvatures equal to $\tanh{\delta}$. On the other hand, the equidistant set in $\Sigma_\delta$ corresponding to the bending locus are intrinsically flat strips with principal curvatures $\tanh{\delta}, 1/\tanh{\delta}$. The union of these flat strips attach to the umbilic components to make the closed $C^{1,1}$ surface $\Sigma_\delta$, where the thickness of the flat strips are given by the bending times $\sinh{\delta}$. In fact, by \cite{Epstein} the Thurston metric can be obtained by taking the limit as $\delta\rightarrow\infty$ of the intrinsic metric of $\Sigma_\delta$ re-scaled by a factor of $4e^{-2\delta}$, after identifying the surfaces $\Sigma_\delta$ by the normal exponential map, although we will not use this fact.

Under the identification given by the normal exponential map, denote by $\gamma_{i,\delta}$ the curve that corresponds to $\gamma_i$ in $\Sigma_\delta$. We have that $\gamma_{i,\delta}\rightarrow r(\gamma_i)$ uniformly as $\delta\rightarrow 0$ and that $\gamma_{i,\delta}$ is geodesic in the convex surface $\Sigma_\delta$. Then, after parametrizing by arc length, the geodesic curvature of $\gamma_{i,\delta}$ at any point given by $\II_\delta(\gamma_{i,\delta}', \gamma_{i,\delta}')$, where $\II_\delta$ is the second fundamental form of $\Sigma_\delta$. This means that the geodesic curvature of $\gamma_{i,\delta}$ is $\tanh{\delta}$ at the umbilic components, and equal to $\tanh{\delta}\cos{\alpha} + \frac{1}{\tanh{\delta}}\sin(\alpha)$ in the flat strips, where $\alpha$ is the angle of $\gamma_{i,\delta}'$ with the horizontal direction of the strip. Since $\gamma$ represents a compressible curve, it is in particular transverse to the bending lamination. So it follows that the intersection of $\gamma_{i,\delta}$ with the attaching locus has measure $0$ (even though it could be as complicated as a Cantor set), and we have then that
\[\theta(\gamma_{i,\delta}) \leq (\tanh{\delta}\sinh{\delta} + \cosh{\delta}\mu(\gamma_{i,\delta})) + \tanh{\delta}.\ell(\gamma_{i,\delta}),
\]
where $\mu(\gamma_{i,\delta})$ denotes the bending measure of $\gamma_{i,\delta}$ and $\ell(\gamma_{i,\delta})$ denotes the length of $\gamma_{i,\delta}$ in $\partial CC(M)$. Hence as $\delta\rightarrow 0$ then $\limsup \theta(\gamma_{i,\delta})$ is bounded by the bending along $\gamma_i$, which in turn is bounded by $\ell_T(\gamma_{i,\delta})$, as $\mu(\gamma_{i,\delta})$ only accounts for the length in the flat pieces of the Thurston metric.

Applying then Lemma \ref{lem:bend} we conclude that

\begin{equation}
    |D_i| \leq \ell_T(\gamma_i) - 2\pi+\epsilon,
\end{equation}
for certain disks $D_i$ that in principle depend on $\epsilon$. Nevertheless, we can replace the upper bound $\ell_T(\gamma_i) - 2\pi$ in equation (\ref{eq:boundvol}) to obtain

\begin{equation}\label{eq:isoperimetricballineq}
    |X| < 2\pi(g-1) + \sum_{i=1}^{g} (\ell_T(\gamma_i) - 2\pi),
\end{equation}
since $\epsilon$ is arbitrary small. Observe that we also recover the well-known fact $\ell_T(\gamma_i)>2\pi$ (see \cite[Theorem 2]{BonahonOtal}).

\noindent\textbf{\emph{Step 3. Main isoperimetric inequality}}

The next natural step is to compare the right hand side of inequality (\ref{eq:isoperimetricballineq})  with $\frac14 L(\mu)$, but before doing so we will strengthen (\ref{eq:isoperimetricballineq}) by applying it to finite covers of the manifold and divide the inequalities by the order of the cover. More precisely, we apply the isoperimetric inequality (\ref{eq:isoperimetricballineq})  to the union of many isometric copies of $X_0$ and we will keep the simplest bound (inequality (\ref{eq:2ndbound})). The main conclusion of this step is inequality (\ref{eq:VrvsEL}) applied to multicurves $\Gamma$ in $\Sigma$ as stated in Theorem \ref{thm:main}.

Denote by $D^{\pm}_1,\ldots,D^{\pm}_g$ the disks bounding each of the $g$ compressible curves, where the notation $\pm$ is used to differentiate between the two boundary regions (per disk) on the lift $X$ of the convex core (see the color labelling of Figure \ref{fig:openHB}). Now we can take the union of $X=X_0$ with its translations by the isometries that identify $D^+_g$ with $D^-_g$, and we denote such union by $X_1$ (see Figure \ref{fig:betterX1}). For this region we can apply again the isoperimetric inequality, which gives us a better estimate on the volume. Let then $X_n$ denote the solid defined inductively as $X_{n-1}$ union the adjacent isometric copies of $X_0$ through the faces corresponding to $D^\pm_g$. Then $X_n$ is the union of $2n+1$ isometric copies of $X_0$, while $\partial X_n$ is the union of $2n+1$ $2g$-holed spheres (with area $4\pi(g-1)$ each), (2n+1) copies of each $D^\pm_i$ ($1\leq i\leq g-1$), and one isometric copy of each $D^\pm_g$. Applying then the isoperimetric inequality to $X_n$  (and recalling that $|D^\pm_i| = |D_i|$) we obtain

\begin{figure}[!htb]
    \centering

\tikzset{every picture/.style={line width=0.75pt}} %set default line width to 0.75pt        

\tikzset{every picture/.style={line width=0.75pt}} %set default line width to 0.75pt        

\begin{tikzpicture}[x=0.75pt,y=0.75pt,yscale=-1,xscale=1]
%uncomment if require: \path (0,411.8000030517578); %set diagram left start at 0, and has height of 411.8000030517578

%Shape: Ellipse [id:dp7295863423306591] 
\draw  [color={rgb, 255:red, 208; green, 2; blue, 27 }  ,draw opacity=1 ] (256.29,122.06) .. controls (256.29,112.08) and (261.11,104) .. (267.05,104) .. controls (273,104) and (277.81,112.08) .. (277.81,122.06) .. controls (277.81,132.03) and (273,140.12) .. (267.05,140.12) .. controls (261.11,140.12) and (256.29,132.03) .. (256.29,122.06) -- cycle ;
%Shape: Ellipse [id:dp04561016258676287] 
\draw  [color={rgb, 255:red, 208; green, 2; blue, 27 }  ,draw opacity=1 ] (311.34,251.74) .. controls (311.34,241.77) and (316.16,233.68) .. (322.1,233.68) .. controls (328.05,233.68) and (332.86,241.77) .. (332.86,251.74) .. controls (332.86,261.72) and (328.05,269.8) .. (322.1,269.8) .. controls (316.16,269.8) and (311.34,261.72) .. (311.34,251.74) -- cycle ;
%Shape: Ellipse [id:dp9173160522643743] 
\draw  [color={rgb, 255:red, 74; green, 144; blue, 226 }  ,draw opacity=1 ][dash pattern={on 4.5pt off 4.5pt}] (221,191.43) .. controls (221,176.26) and (229.61,163.96) .. (240.23,163.96) .. controls (250.85,163.96) and (259.46,176.26) .. (259.46,191.43) .. controls (259.46,206.6) and (250.85,218.9) .. (240.23,218.9) .. controls (229.61,218.9) and (221,206.6) .. (221,191.43) -- cycle ;
%Shape: Ellipse [id:dp4921481172992467] 
\draw  [color={rgb, 255:red, 74; green, 144; blue, 226 }  ,draw opacity=1 ][dash pattern={on 4.5pt off 4.5pt}] (360.04,188.64) .. controls (360.04,173.47) and (368.65,161.17) .. (379.27,161.17) .. controls (389.89,161.17) and (398.5,173.47) .. (398.5,188.64) .. controls (398.5,203.81) and (389.89,216.11) .. (379.27,216.11) .. controls (368.65,216.11) and (360.04,203.81) .. (360.04,188.64) -- cycle ;
%Curve Lines [id:da07322562960238443] 
\draw    (275.7,112.23) .. controls (296.87,159.64) and (322.98,163.12) .. (378.03,161.03) ;

%Curve Lines [id:da09514499574677981] 
\draw    (259.11,135.38) .. controls (265.11,145) and (260.88,165.22) .. (239.7,163.82) ;

%Curve Lines [id:da4325789297592817] 
\draw    (239.35,219.04) .. controls (273.58,215.42) and (287.69,203.56) .. (311.69,257.95) ;

%Curve Lines [id:da5352383111651359] 
\draw    (330.75,241.91) .. controls (318.04,207.05) and (349.1,213.32) .. (378.03,216.11) ;

%Shape: Ellipse [id:dp5552662782078707] 
\draw  [color={rgb, 255:red, 208; green, 2; blue, 27 }  ,draw opacity=1 ] (397.29,120.06) .. controls (397.29,110.08) and (402.11,102) .. (408.05,102) .. controls (414,102) and (418.81,110.08) .. (418.81,120.06) .. controls (418.81,130.03) and (414,138.12) .. (408.05,138.12) .. controls (402.11,138.12) and (397.29,130.03) .. (397.29,120.06) -- cycle ;
%Shape: Ellipse [id:dp7999094294474034] 
\draw  [color={rgb, 255:red, 208; green, 2; blue, 27 }  ,draw opacity=1 ] (452.34,249.74) .. controls (452.34,239.77) and (457.16,231.68) .. (463.1,231.68) .. controls (469.05,231.68) and (473.86,239.77) .. (473.86,249.74) .. controls (473.86,259.72) and (469.05,267.8) .. (463.1,267.8) .. controls (457.16,267.8) and (452.34,259.72) .. (452.34,249.74) -- cycle ;
%Shape: Ellipse [id:dp986068865607945] 
\draw  [color={rgb, 255:red, 74; green, 144; blue, 226 }  ,draw opacity=1 ] (501.04,186.64) .. controls (501.04,171.47) and (509.65,159.17) .. (520.27,159.17) .. controls (530.89,159.17) and (539.5,171.47) .. (539.5,186.64) .. controls (539.5,201.81) and (530.89,214.11) .. (520.27,214.11) .. controls (509.65,214.11) and (501.04,201.81) .. (501.04,186.64) -- cycle ;
%Curve Lines [id:da5444973756292707] 
\draw    (416.7,110.23) .. controls (437.87,157.64) and (463.98,161.12) .. (519.03,159.03) ;

%Curve Lines [id:da23616052355689154] 
\draw    (400.11,133.38) .. controls (406.11,143) and (401.88,163.22) .. (380.7,161.82) ;

%Curve Lines [id:da1159043455695925] 
\draw    (380.35,217.04) .. controls (414.58,213.42) and (428.69,201.56) .. (452.69,255.95) ;

%Curve Lines [id:da5594128683677312] 
\draw    (471.75,239.91) .. controls (459.04,205.05) and (490.1,211.32) .. (519.03,214.11) ;

%Shape: Ellipse [id:dp5188013928482115] 
\draw  [color={rgb, 255:red, 208; green, 2; blue, 27 }  ,draw opacity=1 ] (119.29,125.06) .. controls (119.29,115.08) and (124.11,107) .. (130.05,107) .. controls (136,107) and (140.81,115.08) .. (140.81,125.06) .. controls (140.81,135.03) and (136,143.12) .. (130.05,143.12) .. controls (124.11,143.12) and (119.29,135.03) .. (119.29,125.06) -- cycle ;
%Shape: Ellipse [id:dp8329406633969406] 
\draw  [color={rgb, 255:red, 208; green, 2; blue, 27 }  ,draw opacity=1 ] (174.34,254.74) .. controls (174.34,244.77) and (179.16,236.68) .. (185.1,236.68) .. controls (191.05,236.68) and (195.86,244.77) .. (195.86,254.74) .. controls (195.86,264.72) and (191.05,272.8) .. (185.1,272.8) .. controls (179.16,272.8) and (174.34,264.72) .. (174.34,254.74) -- cycle ;
%Shape: Ellipse [id:dp8353567972638577] 
\draw  [color={rgb, 255:red, 74; green, 144; blue, 226 }  ,draw opacity=1 ] (84,194.43) .. controls (84,179.26) and (92.61,166.96) .. (103.23,166.96) .. controls (113.85,166.96) and (122.46,179.26) .. (122.46,194.43) .. controls (122.46,209.6) and (113.85,221.9) .. (103.23,221.9) .. controls (92.61,221.9) and (84,209.6) .. (84,194.43) -- cycle ;
%Curve Lines [id:da31768787443828983] 
\draw    (138.7,115.23) .. controls (159.87,162.64) and (185.98,166.12) .. (241.03,164.03) ;

%Curve Lines [id:da8202846970423802] 
\draw    (122.11,138.38) .. controls (128.11,148) and (123.88,168.22) .. (102.7,166.82) ;

%Curve Lines [id:da5487750604816559] 
\draw    (102.35,222.04) .. controls (136.58,218.42) and (150.69,206.56) .. (174.69,260.95) ;

%Curve Lines [id:da06243188169244229] 
\draw    (193.75,244.91) .. controls (181.04,210.05) and (212.1,216.32) .. (241.03,219.11) ;

\end{tikzpicture}

    \caption{$X_1$ when following adjacency only through $D^\pm_g$ for $g=2$}
    \label{fig:betterX1}
\end{figure}

\begin{equation}
    (2n+1)|X_0| = |X_n| \leq \frac12|\partial X_n| = \frac12 \left( (2n+1)4\pi(g-1) + \sum_{i=1}^{g-1}(2n+1)2|D_i| +  2|D_g| \right).
\end{equation}
Dividing by $2n+1$ and sending $n$ to infinity we obtain

\begin{equation}\label{eq:2ndbound}
    |X_0| \leq 2\pi(g-1) + \sum_{i=1}^{g-1} |D_i|.
\end{equation}
Observe that this in particular establishes an isoperimetric inequality for a solid torus, as the term $2\pi(g-1) + \sum_{i=1}^{g-1} |D_i|$ corresponds to the area of the boundary of the solid torus obtained by cutting along $\lbrace D_i\rbrace_{1\leq i\leq g-1}$.

Finally, we apply equation (\ref{eq:2ndbound}) to bound $\VC$ and subsequently $\VR$. Denote the union of $\gamma_i$ by $\Gamma$, then the bound for $\VC$ using equation (\ref{eq:2ndbound}) and Lemma \ref{lem:bend}  is given by

\begin{equation}\label{eq:VcvsEL}
    \VC(M) \leq 2\pi(g-1) + \ell_T(\Gamma) -2\pi(g-1) = \ell_T(\Gamma),
\end{equation}
which combined with Theorem \ref{thm:VRVC} gives us a bound for $\VR$

\begin{equation}\label{eq:VrvsEL}
    \VR(M) \leq \ell_T(\Gamma) - \frac14L(\mu)
\end{equation}

Note that the improvement (\ref{eq:2ndbound}) on the isoperimetric inequality has substituted the upper bound (\ref{eq:isoperimetricballineq}) (which works for a compressible multicurve $\Gamma$ so that $\Sigma\setminus\Gamma$ is connected with genus $0$) with an upper bound which works for a compressible multicurve $\Gamma$ so that $\Sigma\setminus\Gamma$ is connected with genus $1$. The last improvement we will do in this step will be to establish (\ref{eq:VcvsEL}), (\ref{eq:VrvsEL}) for compressible multicurves $\Gamma$ so that $\Sigma\setminus\Gamma$ is potentially disconnected but each component of $\Sigma\setminus\Gamma$ has genus $1$.

For such $\Gamma$ we argue as follows. Observe that if we add compressing disks of $\Gamma$ to $\Sigma\setminus\Gamma$ we obtain the boundary of solid tori $T_1,\ldots, T_k$, whose union is the convex core of $M$. To the collection of curves $\Gamma$ add one simple closed compressible curve for each component of $\Sigma\setminus\Gamma$ to obtain a collection of compressible curves $\Gamma'\supset\Gamma$. Then for $\Gamma'$ we have that each component of $\Sigma\setminus\Gamma'$ has genus $0$, and by adding compressing disks to them each component bounds a $3$-ball. Then we can apply the present step to these balls to obtain the volume bound (\ref{eq:2ndbound}) for each solid torus $T_1,\ldots, T_k$ obtained from $\Gamma$, where (\ref{eq:2ndbound}) would be
\[|T_j|\leq \frac12|T_j\cap (\Sigma\setminus\Gamma)| + \sum_{\text{ D compressing disk adjacent to } T_j} \frac12|D|
\]

Each curve in $\gamma\in\Gamma$ will contribute the area of a compressing disk once in each inequality associated to an adjacent component to $T_1,\ldots, T_k$, and it appears twice if $\gamma$ is adjacent to the same component from both sides. Finally, by adding all these inequalities and recalling that the volume of the solid tori adds to the volume of the convex core, the areas of $T_1\cap(\Sigma\setminus\Gamma),\ldots, T_k\cap(\Sigma\setminus\Gamma)$ add to $4\pi(g-1)$, and each compressing disk for $\gamma\in\Gamma$ appears exactly twice along the inequalities, we can conclude 

\begin{equation}
    \VC(M) \leq 2\pi(g-1) + \sum_{\gamma\in\Gamma} |D_\gamma|
\end{equation}
where $D_\gamma$ is a compressing disk of $\gamma$ in $CC(M)$. By employing Lemma \ref{lem:bend} as before we see that (\ref{eq:VcvsEL}), (\ref{eq:VrvsEL}) also hold whenever each component of $\Sigma\setminus\Gamma$ has genus $1$.

\noindent\textbf{\emph{Step 4. Extremal length}}

With the simplification of Step 3 done, in the next step we will bound $\ell_T(\Gamma)$ by a constant (depending only on the genus $g$ and the conformal structure of $\Sigma$) times $\sqrt{L(\mu) + 4\pi(g-1)}$, which is the square root of the area of the Thurston metric. The exponents are the natural ones to consider so that the inequality is scale invariant. Motivated by this and of desire to describe $\ell_T$ in terms that depend only on the Riemann surface $\Sigma$, we appeal to the concept of \textit{extremal length} (Definition \ref{defi:extlength})

By considering the Thurston metric, which has area $L(\mu) + 4\pi(g-1)$, it is easy to see from Definition~\ref{defi:extlength} that $\ell_T(\gamma_i) \leq \sqrt{EL(\gamma_i)}\sqrt{(L(\mu) + 4\pi(g-1))}$.

For a closed surface $S$ denote by $\mathcal{C}_{g-1}(S)$ the collection of (unordered) $(g-1)$-tuples of distinct non-trivial homotopy classes in $S$ that can be simultaneously represented by pairwise disjoint simple closed curves so that each component of their complement has genus $1$. Moreover, if $\Sigma$ is a Riemann surface with topological type $S$ then we also define

\[
    L(\Sigma,\Gamma) =  \sum_{i=1}^{g-1}\sqrt{EL(\gamma_i)} ,\, \{\gamma_i\}_{1\leq i\leq g-1}=\Gamma\in\mathcal{C}_{g-1}(S)
\]

\[
    L(\Sigma) = \min_{\Gamma\in\mathcal{C}_{g-1}(S)}\{ L(\Sigma,\Gamma) | \Gamma\in\mathcal{C}_{g-1}(S)\}
\]
Observe that these functions are defined over $\MM(S)\times C_{g-1}(S)$ and $\MM(S)$, where $\MM(S)$ denotes the moduli space of Riemann surfaces of topological type $S$.

Then, by Equation (\ref{eq:VrvsEL}) and the definition of $L(\Sigma, \Gamma)$, the bound for $\VR$ becomes:

\begin{equation}\label{eq:polybound}
    \begin{split}
    \VR(M) &\leq L(\Sigma,\Gamma)\sqrt{4\pi(g-1) + L(\mu)}-\frac14 L(\mu)\\
    &= L(\Sigma,\Gamma)\sqrt{4\pi(g-1) + L(\mu)} -\frac14(4\pi(g-1) + L(\mu)) + \pi(g-1)
    \end{split}
\end{equation}
as long as $\Gamma$ is compressible in $M$.

Since the quadratic polynomial $P_K(x)=Kx-\frac{x^2}{4}$ attains its unique maximum at $x=2K$ with value $K^2$, then we have

\begin{equation}\label{eq:thm0}
    \VR \leq L(\Sigma,\Gamma)^2 + \pi(g-1)
\end{equation}
and in particular there exists $M$ with $\partial M$ conformal to $\Sigma$ so that

\begin{equation}\label{eq:thm1}
    \VR \leq L(\Sigma)^2 + \pi(g-1)
\end{equation}

Note that since $\MM(S)$ is non-compact, it is of interest for inequality (\ref{eq:thm1}) to known whether $L:\MM(S) \rightarrow \RR$ has bounded image or not. That is the purpose of the next lemma.

\begin{lem}\label{lem:bimage}
$L:\MM(S) \rightarrow \RR$ has bounded image.
\end{lem}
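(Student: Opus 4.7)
\textbf{The plan} is to produce, on every $\Sigma \in \TT(S)$, a distinguished element $\Gamma \in \mathcal{C}_{g-1}(\Sigma)$ whose curves have extremal length bounded by a constant depending only on the genus $g$, which will bound $L(\Sigma) \le L(\Sigma,\Gamma)$ uniformly. The natural source for such a $\Gamma$ is a short pants decomposition of the hyperbolic metric in the conformal class $\Sigma$.

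\textbf{Step 1.} Invoke Bers' theorem: on the hyperbolic representative of $\Sigma$ there exists a pants decomposition $\{\alpha_1, \dots, \alpha_{3g-3}\}$ with $\ell_{\rm hyp}(\alpha_j) \leq B_g$, where $B_g$ is the Bers constant depending only on the genus. In particular, any choice of $g-1$ of these curves, say $\Gamma = \{\alpha_1, \dots, \alpha_{g-1}\}$, lies in $\mathcal{C}_{g-1}(\Sigma)$ since the $\alpha_j$ are pairwise disjoint, simple, and in distinct nontrivial homotopy classes.

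\textbf{Step 2.} Use a universal comparison between extremal length and hyperbolic length for a simple closed geodesic $\gamma$ on a hyperbolic surface, of the shape $EL(\gamma) \le F(\ell_{\rm hyp}(\gamma))$ for some increasing function $F$ depending only on $\ell_{\rm hyp}(\gamma)$ and not on the ambient surface (for instance, Maskit's inequality, or more elementarily, the bound obtained by placing a flat cylinder of definite modulus inside the standard collar around $\gamma$ and computing the extremal length of the core curve of that annulus). The key point here is that this comparison is uniform over $\TT(S)$ because both the collar width and the hyperbolic length of $\gamma$ are controlled by $\ell_{\rm hyp}(\gamma)$ alone.

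\textbf{Step 3.} Combining Steps 1 and 2,
\begin{equation*}
L(\Sigma) \;\le\; L(\Sigma,\Gamma) \;=\; \sum_{i=1}^{g-1}\sqrt{EL(\alpha_i)} \;\le\; (g-1)\,\sqrt{F(B_g)},
\end{equation*}
a bound depending only on $g$, which proves the lemma.

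\textbf{Main obstacle.} There is no serious obstruction; the only subtle point is that extremal length is by definition a global conformal invariant of $\Sigma$, so one must make sure the comparison with hyperbolic length used in Step 2 is intrinsic to $\gamma$ (depending only on $\ell_{\rm hyp}(\gamma)$) and does not degenerate as $\Sigma$ approaches the boundary of moduli space. Using the annular modulus of the standard hyperbolic collar of $\gamma$ circumvents this, since the collar lemma provides a definite lower bound on the modulus as $\ell_{\rm hyp}(\gamma) \to 0$ and a direct estimate when $\ell_{\rm hyp}(\gamma)$ is bounded above by $B_g$.
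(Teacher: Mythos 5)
Your proof is correct, but it takes a genuinely different route from the paper. The paper argues by contradiction and compactness: it takes a sequence $\Sigma_n$ with $L(\Sigma_n)\to\infty$, uses the mapping class group invariance of $L$ together with the Deligne--Mumford compactification to assume $\Sigma_n$ degenerates to a cusped surface by pinching a collection $\Gamma_1$, completes $\Gamma_1$ to a $(g-1)$-tuple $\Gamma=\Gamma_1+\Gamma_2$, and then shows $L(\Sigma_n,\Gamma)$ stays bounded (the pinched curves have extremal length tending to $0$; the others are controlled by monotonicity of extremal length under restriction and quasiconformal distortion on the thick part). You instead argue directly: a Bers pants decomposition supplies $g-1$ disjoint, non-homotopic, essential simple closed curves of hyperbolic length at most $B_g$, and a uniform comparison of extremal and hyperbolic length (Maskit's inequality $EL(\gamma)\leq \tfrac12\ell_{\rm hyp}(\gamma)e^{\ell_{\rm hyp}(\gamma)/2}$, or the reciprocal of the collar modulus) converts this into $L(\Sigma)\leq (g-1)\sqrt{F(B_g)}$. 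Both arguments are sound; your Step 1 correctly lands in $\mathcal{C}_{g-1}(\Sigma)$ since compressibility is not required in the definition of $L(\Sigma)$, and the Maskit/collar bound is indeed uniform over $\TT(S)$. What your approach buys is an explicit, constructive constant, which is directly relevant to the question raised at the end of Section 4 about the maximal value of $L(\Sigma)$ and its growth in $g$ (though the resulting bound, being exponential in the Bers constant, is far from the conjectured $\sqrt{g}\log g$ order). What the paper's soft argument buys is that it needs no quantitative length comparison at all, only the qualitative behavior of extremal length under degeneration.
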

\begin{proof}
We will detail a well-known argument that in particular obtains an explicit bound for such constant. By \cite[Corollary 3]{Maskit} we can bound the extremal length of a short pants decomposition with respect to the hyperbolic metric. Namely, from \cite[Remark 5.2.5 (i)]{Buser} it follows that any Riemann surface $\Sigma$ of genus $g$ has a pants decomposition so that the hyperbolic length of each curve is less than or equal than $21(g-1)$. \cite[Corollary 3]{Maskit} establishes that extremal length of a simple closed curve is bounded above by $\frac\ell2 e^{\ell/2}$, where $\ell$ denotes the hyperbolic length of the geodesic representative in the homotopy class of the curve. Combining these results and after some basic calculus we can bound $L(\Sigma)$ by $\sqrt{\frac{21}{2}}(g-1)^{3/2}e^{\frac{21(g-1)}{4}}$.
\end{proof}

\noindent\textbf{\emph{Step 5. Stronger inequalities for ``small" extremal length.}}

Recall that right after equation (\ref{eq:isoperimetricballineq}) we observed that the length of each individual compressing curve $\ell_T(\gamma_i)$ was greater than $2\pi$, so in particular $\sqrt{EL(\gamma_i)}\sqrt{(L(\mu) + 4\pi(g-1))} \geq \ell_T(\gamma_i) > 2\pi$. Adding all these inequalities we have

\begin{equation}\label{eq:known}
    L(\Sigma,\Gamma)\sqrt{L(\mu) + 4\pi(g-1)} > 2\pi(g-1).
\end{equation}
This in particular says that the maximization process for the quadratic polynomial $P_K(x)=Kx-\frac{x^2}{4}$ is actually on a restricted domain. On this final step we will take advantage of that phenomenon to improve our bound on $\VR$ for appropriately restricted conditions.

Consider then general genus $g$ and add the assumption $L(\Sigma,\Gamma) \leq \sqrt{\pi(g-1)} $. Applying this to (\ref{eq:known}) we obtain  $\sqrt{4\pi(g-1) + L(\mu)}> \frac{2\pi(g-1)}{L(\Sigma,\Gamma)}\geq 2L(\Sigma,\Gamma)$. This inequality says then that the range of $x=\sqrt{4\pi(g-1)+L(\mu)}$ for the quadratic polynomial $L(\Sigma,\Gamma)x-\frac14 x^2$ does not contain its maximum critical value $2L(\Sigma,\Gamma)$, thus we can replace $x=\frac{2\pi(g-1)}{L(\Sigma,\Gamma)}$ in (\ref{eq:polybound}) to obtain the upper bound 

\begin{equation}\label{eq:thm2}
    \begin{split}
    \VR (M) &\leq 2\pi(g-1) - \frac14\left( \frac{4\pi^2(g-1)^2}{L(\Sigma,\Gamma)^2}\right) + \pi(g-1)\\
    &= \pi(g-1)\left(3-\frac{\pi(g-1)}{L(\Sigma,\Gamma)^2}\right)
    \end{split}
\end{equation}
so then $\VR(M)$ is negative if $L(\Sigma,\Gamma) \leq \sqrt{\frac{\pi(g-1)}{3}}$. Observe that even in $g=2$ this restriction is stronger than $\frac{2}{\sqrt{3}}$ bound on the systolic ratio.

\textbf{Proof of Theorem \ref{thm:main}}: The main theorem follows from inequalities (\ref{eq:thm0}), (\ref{eq:thm1}) and (\ref{eq:thm2}).
\qed

\begin{remark}
Although on Step 3 we proceeded by using (\ref{eq:2ndbound}), there are other similar inequalities we could have chosen instead. Let us justify of our particular selection.

In a similar way to $X_n$ define $Y_n$ inductively as the solid defined by attaching to $Y_{n-1}$ \emph{all} the adjacent isometric copies of $X_0$ (see Figure \ref{fig:X1}, $Y_0=X_0$). Then if $d_n$ denotes the number of copies of $D^{\pm}_i$ (which is not hard to see that stays the same for all $1\leq i\leq g$ and $\pm$) and $x_n$ the number of copies of $X_0$ in $Y_n$, we have that they satisfy the following recurrence formulas

\begin{equation}
    \begin{split}
        x_{n+1} &= x_n + 2g.d_n\\
        d_{n+1} &= (2g-1)d_n
    \end{split}
\end{equation}
so then $d_n=(2g-1)^n,\,x_n=\frac{g}{g-1}((2g-1)^n-1) + 1$.

\begin{figure}[!htb]
    \centering

\tikzset{every picture/.style={line width=0.75pt}} %set default line width to 0.75pt        

\begin{tikzpicture}[x=0.75pt,y=0.75pt,yscale=-1,xscale=1]
%uncomment if require: \path (0,411.8000030517578); %set diagram left start at 0, and has height of 411.8000030517578

%Shape: Ellipse [id:dp7295863423306591] 
\draw  [color={rgb, 255:red, 208; green, 2; blue, 27 }  ,draw opacity=1 ][dash pattern={on 4.5pt off 4.5pt}] (256.29,122.06) .. controls (256.29,112.08) and (261.11,104) .. (267.05,104) .. controls (273,104) and (277.81,112.08) .. (277.81,122.06) .. controls (277.81,132.03) and (273,140.12) .. (267.05,140.12) .. controls (261.11,140.12) and (256.29,132.03) .. (256.29,122.06) -- cycle ;
%Shape: Ellipse [id:dp04561016258676287] 
\draw  [color={rgb, 255:red, 208; green, 2; blue, 27 }  ,draw opacity=1 ][dash pattern={on 4.5pt off 4.5pt}] (311.34,251.74) .. controls (311.34,241.77) and (316.16,233.68) .. (322.1,233.68) .. controls (328.05,233.68) and (332.86,241.77) .. (332.86,251.74) .. controls (332.86,261.72) and (328.05,269.8) .. (322.1,269.8) .. controls (316.16,269.8) and (311.34,261.72) .. (311.34,251.74) -- cycle ;
%Shape: Ellipse [id:dp9173160522643743] 
\draw  [color={rgb, 255:red, 74; green, 144; blue, 226 }  ,draw opacity=1 ][dash pattern={on 4.5pt off 4.5pt}] (221,191.43) .. controls (221,176.26) and (229.61,163.96) .. (240.23,163.96) .. controls (250.85,163.96) and (259.46,176.26) .. (259.46,191.43) .. controls (259.46,206.6) and (250.85,218.9) .. (240.23,218.9) .. controls (229.61,218.9) and (221,206.6) .. (221,191.43) -- cycle ;
%Shape: Ellipse [id:dp4921481172992467] 
\draw  [color={rgb, 255:red, 74; green, 144; blue, 226 }  ,draw opacity=1 ][dash pattern={on 4.5pt off 4.5pt}] (360.04,188.64) .. controls (360.04,173.47) and (368.65,161.17) .. (379.27,161.17) .. controls (389.89,161.17) and (398.5,173.47) .. (398.5,188.64) .. controls (398.5,203.81) and (389.89,216.11) .. (379.27,216.11) .. controls (368.65,216.11) and (360.04,203.81) .. (360.04,188.64) -- cycle ;
%Curve Lines [id:da07322562960238443] 
\draw    (275.7,112.23) .. controls (296.87,159.64) and (322.98,163.12) .. (378.03,161.03) ;

%Curve Lines [id:da09514499574677981] 
\draw    (259.11,135.38) .. controls (265.11,145) and (260.88,165.22) .. (239.7,163.82) ;

%Curve Lines [id:da4325789297592817] 
\draw    (239.35,219.04) .. controls (273.58,215.42) and (287.69,203.56) .. (311.69,257.95) ;

%Curve Lines [id:da5352383111651359] 
\draw    (330.75,241.91) .. controls (318.04,207.05) and (349.1,213.32) .. (378.03,216.11) ;

%Shape: Ellipse [id:dp5552662782078707] 
\draw  [color={rgb, 255:red, 208; green, 2; blue, 27 }  ,draw opacity=1 ] (397.29,120.06) .. controls (397.29,110.08) and (402.11,102) .. (408.05,102) .. controls (414,102) and (418.81,110.08) .. (418.81,120.06) .. controls (418.81,130.03) and (414,138.12) .. (408.05,138.12) .. controls (402.11,138.12) and (397.29,130.03) .. (397.29,120.06) -- cycle ;
%Shape: Ellipse [id:dp7999094294474034] 
\draw  [color={rgb, 255:red, 208; green, 2; blue, 27 }  ,draw opacity=1 ] (452.34,249.74) .. controls (452.34,239.77) and (457.16,231.68) .. (463.1,231.68) .. controls (469.05,231.68) and (473.86,239.77) .. (473.86,249.74) .. controls (473.86,259.72) and (469.05,267.8) .. (463.1,267.8) .. controls (457.16,267.8) and (452.34,259.72) .. (452.34,249.74) -- cycle ;
%Shape: Ellipse [id:dp986068865607945] 
\draw  [color={rgb, 255:red, 74; green, 144; blue, 226 }  ,draw opacity=1 ] (501.04,186.64) .. controls (501.04,171.47) and (509.65,159.17) .. (520.27,159.17) .. controls (530.89,159.17) and (539.5,171.47) .. (539.5,186.64) .. controls (539.5,201.81) and (530.89,214.11) .. (520.27,214.11) .. controls (509.65,214.11) and (501.04,201.81) .. (501.04,186.64) -- cycle ;
%Curve Lines [id:da5444973756292707] 
\draw    (416.7,110.23) .. controls (437.87,157.64) and (463.98,161.12) .. (519.03,159.03) ;

%Curve Lines [id:da23616052355689154] 
\draw    (400.11,133.38) .. controls (406.11,143) and (401.88,163.22) .. (380.7,161.82) ;

%Curve Lines [id:da1159043455695925] 
\draw    (380.35,217.04) .. controls (414.58,213.42) and (428.69,201.56) .. (452.69,255.95) ;

%Curve Lines [id:da5594128683677312] 
\draw    (471.75,239.91) .. controls (459.04,205.05) and (490.1,211.32) .. (519.03,214.11) ;

%Shape: Ellipse [id:dp5188013928482115] 
\draw  [color={rgb, 255:red, 208; green, 2; blue, 27 }  ,draw opacity=1 ] (119.29,125.06) .. controls (119.29,115.08) and (124.11,107) .. (130.05,107) .. controls (136,107) and (140.81,115.08) .. (140.81,125.06) .. controls (140.81,135.03) and (136,143.12) .. (130.05,143.12) .. controls (124.11,143.12) and (119.29,135.03) .. (119.29,125.06) -- cycle ;
%Shape: Ellipse [id:dp8329406633969406] 
\draw  [color={rgb, 255:red, 208; green, 2; blue, 27 }  ,draw opacity=1 ] (174.34,254.74) .. controls (174.34,244.77) and (179.16,236.68) .. (185.1,236.68) .. controls (191.05,236.68) and (195.86,244.77) .. (195.86,254.74) .. controls (195.86,264.72) and (191.05,272.8) .. (185.1,272.8) .. controls (179.16,272.8) and (174.34,264.72) .. (174.34,254.74) -- cycle ;
%Shape: Ellipse [id:dp8353567972638577] 
\draw  [color={rgb, 255:red, 74; green, 144; blue, 226 }  ,draw opacity=1 ] (84,194.43) .. controls (84,179.26) and (92.61,166.96) .. (103.23,166.96) .. controls (113.85,166.96) and (122.46,179.26) .. (122.46,194.43) .. controls (122.46,209.6) and (113.85,221.9) .. (103.23,221.9) .. controls (92.61,221.9) and (84,209.6) .. (84,194.43) -- cycle ;
%Curve Lines [id:da31768787443828983] 
\draw    (138.7,115.23) .. controls (159.87,162.64) and (185.98,166.12) .. (241.03,164.03) ;

%Curve Lines [id:da8202846970423802] 
\draw    (122.11,138.38) .. controls (128.11,148) and (123.88,168.22) .. (102.7,166.82) ;

%Curve Lines [id:da5487750604816559] 
\draw    (102.35,222.04) .. controls (136.58,218.42) and (150.69,206.56) .. (174.69,260.95) ;

%Curve Lines [id:da06243188169244229] 
\draw    (193.75,244.91) .. controls (181.04,210.05) and (212.1,216.32) .. (241.03,219.11) ;

%Shape: Ellipse [id:dp6488804704195652] 
\draw  [color={rgb, 255:red, 208; green, 2; blue, 27 }  ,draw opacity=1 ] (200.29,-7.94) .. controls (200.29,-17.92) and (205.11,-26) .. (211.05,-26) .. controls (217,-26) and (221.81,-17.92) .. (221.81,-7.94) .. controls (221.81,2.03) and (217,10.12) .. (211.05,10.12) .. controls (205.11,10.12) and (200.29,2.03) .. (200.29,-7.94) -- cycle ;
%Shape: Ellipse [id:dp40814442136446105] 
\draw  [color={rgb, 255:red, 74; green, 144; blue, 226 }  ,draw opacity=1 ] (165,61.43) .. controls (165,46.26) and (173.61,33.96) .. (184.23,33.96) .. controls (194.85,33.96) and (203.46,46.26) .. (203.46,61.43) .. controls (203.46,76.6) and (194.85,88.9) .. (184.23,88.9) .. controls (173.61,88.9) and (165,76.6) .. (165,61.43) -- cycle ;
%Shape: Ellipse [id:dp10352071091137627] 
\draw  [color={rgb, 255:red, 74; green, 144; blue, 226 }  ,draw opacity=1 ] (304.04,58.64) .. controls (304.04,43.47) and (312.65,31.17) .. (323.27,31.17) .. controls (333.89,31.17) and (342.5,43.47) .. (342.5,58.64) .. controls (342.5,73.81) and (333.89,86.11) .. (323.27,86.11) .. controls (312.65,86.11) and (304.04,73.81) .. (304.04,58.64) -- cycle ;
%Curve Lines [id:da2619768028913516] 
\draw    (219.7,-17.77) .. controls (240.87,29.64) and (266.98,33.12) .. (322.03,31.03) ;

%Curve Lines [id:da49729773011058176] 
\draw    (203.11,5.38) .. controls (209.11,15) and (204.88,35.22) .. (183.7,33.82) ;

%Curve Lines [id:da5446218255758594] 
\draw    (183.35,89.04) .. controls (217.58,85.42) and (231.69,73.56) .. (255.69,127.95) ;

%Curve Lines [id:da2663893645898455] 
\draw    (274.75,111.91) .. controls (262.04,77.05) and (293.1,83.32) .. (322.03,86.11) ;

%Shape: Ellipse [id:dp603474257443366] 
\draw  [color={rgb, 255:red, 208; green, 2; blue, 27 }  ,draw opacity=1 ] (364.34,377.74) .. controls (364.34,367.77) and (369.16,359.68) .. (375.1,359.68) .. controls (381.05,359.68) and (385.86,367.77) .. (385.86,377.74) .. controls (385.86,387.72) and (381.05,395.8) .. (375.1,395.8) .. controls (369.16,395.8) and (364.34,387.72) .. (364.34,377.74) -- cycle ;
%Shape: Ellipse [id:dp2915042553588931] 
\draw  [color={rgb, 255:red, 74; green, 144; blue, 226 }  ,draw opacity=1 ] (274,317.43) .. controls (274,302.26) and (282.61,289.96) .. (293.23,289.96) .. controls (303.85,289.96) and (312.46,302.26) .. (312.46,317.43) .. controls (312.46,332.6) and (303.85,344.9) .. (293.23,344.9) .. controls (282.61,344.9) and (274,332.6) .. (274,317.43) -- cycle ;
%Shape: Ellipse [id:dp6768032593024496] 
\draw  [color={rgb, 255:red, 74; green, 144; blue, 226 }  ,draw opacity=1 ] (413.04,314.64) .. controls (413.04,299.47) and (421.65,287.17) .. (432.27,287.17) .. controls (442.89,287.17) and (451.5,299.47) .. (451.5,314.64) .. controls (451.5,329.81) and (442.89,342.11) .. (432.27,342.11) .. controls (421.65,342.11) and (413.04,329.81) .. (413.04,314.64) -- cycle ;
%Curve Lines [id:da30365520890128483] 
\draw    (328.7,238.23) .. controls (349.87,285.64) and (375.98,289.12) .. (431.03,287.03) ;

%Curve Lines [id:da8640725552149322] 
\draw    (312.11,261.38) .. controls (318.11,271) and (313.88,291.22) .. (292.7,289.82) ;

%Curve Lines [id:da022262374787747063] 
\draw    (292.35,345.04) .. controls (326.58,341.42) and (340.69,329.56) .. (364.69,383.95) ;

%Curve Lines [id:da6114249631327392] 
\draw    (383.75,367.91) .. controls (371.04,333.05) and (402.1,339.32) .. (431.03,342.11) ;

\end{tikzpicture}

    \caption{$Y_1$ for genus $g=2$}
    \label{fig:X1}
\end{figure}

Applying the isoperimetric inequality to $Y_n$ we have

\begin{equation}
    x_n|X_0| = |Y_n| \leq \frac12|\partial Y_n| = \frac12 (x_n.4\pi(g-1) + d_n.\sum_{i=1}^g 2|D_i|) 
\end{equation}
where we are seeing $Y_n$ as union of copies of $X_0$. Hence we have

\begin{equation}
    |X_0| \leq 2\pi(g-1) + \frac{d_n}{x_n}\sum_{i=1}^g |D_i|,
\end{equation}
so by sending $n$ to infinity we have

\begin{equation}\label{eq:1stbound}
    |X_0| \leq 2\pi(g-1) + \frac{g-1}{g} \sum_{i=1}^g |D_i|
\end{equation}
%We can do a similar construction by only adding copies of $X_0$ adjacent through $D^\pm_g$ (see Figure \ref{fig:betterX1}). After the previous analysis, it is easy to see that the bound for $X_0$ will be 
Finally, we could also look at the bound obtained by adding copies of $X_0$ adjacent through $D^\pm_i$ for $g-k+1\leq i \leq g$. Here we will obtain

\begin{equation}\label{eq:3rdbound}
    |X_0| \leq 2\pi(g-1) + \sum_{i=1}^{g-k}|D_i| + \frac{k-1}{k}\sum_{i=g-k+1}^g |D_i|
\end{equation}
As long as we order our labeling so that the area $|D_i|$ is increasing on $i$ (or increasing on any bound we have available on those areas) Inequality (\ref{eq:2ndbound}) is better than either (\ref{eq:1stbound}), (\ref{eq:3rdbound}). For this reason we will use equation (\ref{eq:2ndbound}) as our improved isoperimetric inequality.
\end{remark}

\begin{remark}
The maximization process of Step 5 is also motivated from the following description for a surface of genus $2$. In this case $\Gamma$ is just one curve, so $L(\Sigma)$ has a similar (but crucially different) definition to the systolic ratio. The systolic ratio is defined as the supremum the quotient $\frac{{\rm systole}^2}{{\rm Area}}$ for all metrics in the surface. In the case $g=2$ the systolic ratio is bounded above by the relatively small constant $ \frac{2}{\sqrt3}$. Unfortunately the systolic ratio and the extremal length systole (the supremum over moduli space of the smallest extremal length) satisfy $SR<ELS$, which is not helpful to bound $L(\Sigma)$ from above. Regardless, if the values of $L(\Sigma)$ are relatively small at least when genus is small, one would answer Maldacena's question for a large portion of the moduli space. 
\end{remark}

Since the square root of the systolic ratio $\frac{{\rm systole}}{\sqrt{{\rm Area}}}$ grows as $\log(g)/\sqrt{g}$ (see [\cite{Gromov83}, Section 5.3] for an upper bound and [\cite{BuSa94}, Equation 1.13] for a lower bound), then $\sup L(\Sigma)$ should grow at least as $\sqrt{g}\log (g)$. Given the role of $L(\Sigma)$ on the inequalities of Theorem \ref{thm:main}, we are interested to determine the order of growth of the bound in (\ref{eq:thm1}). Hence the following questions are very natural.

\begin{que}
What is the maximum value of $L(\Sigma)$ for a given genus? How does this quantity behave asymptotically as $g\rightarrow\infty$?
\end{que}

Observe that while the bound $\sqrt{\frac{21}{2}}(g-1)^{3/2}e^{\frac{21(g-1)}{4}}$ is explicit on the genus, it is not likely to be optimal. We expect that that this is the case since  \cite[Corollary 3]{Maskit} applies to all simple closed curves, while \emph{shortest} curves have often better estimates for extremal length (see for instance \cite[Equation 3.12]{BuSa94} for the hyperbolic systole).

Finally, since $g^\frac12 < g^\frac12 \log (g)$, one would need a stronger approach to solve Question \ref{que:VRneg} for large genus. On the other hand, one can wonder if pulling tight certain inequalities in our approach would yield an answer for small genus.

\begin{que}
Since the isoperimetric and length bounds we are using are not on configurations that realize equality, can we pull tight the inequalities to fully answer Maldacena's question, at least for $g=2$?
\end{que}

\bibliographystyle{amsalpha}
\bibliography{mybib}

\end{document}